\documentclass[10.5pt,reqno]{amsart}
\usepackage{longtable} 
\usepackage{hyperref}
\usepackage[T1]{fontenc}
\usepackage[utf8]{inputenc}
\usepackage[english]{babel} 
\usepackage{textcomp}
\usepackage{dsfont}
\usepackage{latexsym}
\usepackage{amssymb}
\usepackage{amsthm}
\usepackage{amsmath} 
\DeclareMathAlphabet{\mathpzc}{OT1}{pzc}{m}{en}
\usepackage{yfonts}
\usepackage{xfrac}
\usepackage{newlfont}
\usepackage{graphicx}
\usepackage{mathtools}
\usepackage{comment}
\usepackage{indentfirst}
\usepackage{braket}
\usepackage{mathrsfs} 
\usepackage{xcolor} 
\usepackage{fixmath}

\usepackage{etoolbox}

\newcommand{\mbeps}{\mathbold{\varepsilon}}

\usepackage{scalerel}[2014/03/10]
\usepackage[usestackEOL]{stackengine}
\newcommand{\dashint}{\,\ThisStyle{\ensurestackMath{%
			\stackinset{c}{.2\LMpt}{c}{.5\LMpt}{\SavedStyle-}{\SavedStyle\phantom{\int}}}%
		\setbox0=\hbox{$\SavedStyle\int\,$}\kern-\wd0}\int}

\DeclareMathOperator{\supp}{Supp}

\DeclareMathOperator{\Hol}{Hol}

\newcommand{\ee}{\mathrm{e}}

\newcommand{\vect}[1]{\mathbf{{#1}}}
\newcommand{\dd}{\mathrm{d}}

\DeclarePairedDelimiter{\abs}{\lvert}{\rvert}

\DeclarePairedDelimiter{\norm}{\lVert}{\rVert}

\let\originalleft\left
\let\originalright\right
\renewcommand{\left}{\mathopen{}\mathclose\bgroup\originalleft}
\renewcommand{\right}{\aftergroup\egroup\originalright}

\newcommand{\N}{\mathds{N}}

\newcommand{\C}{\mathds{C}}

\newcommand{\R}{\mathds{R}}

\newcommand{\Fc}{\mathcal{F}}

\newcommand{\Hc}{\mathcal{H}}

\newcommand{\Lc}{\mathcal{L}}
\renewcommand{\Mc}{\mathcal{M}}

\newcommand{\Sc}{\mathcal{S}}

\newcommand{\meg}{\leqslant}
\newcommand{\Meg}{\geqslant}
\newcommand{\eps}{\varepsilon}
\renewcommand{\phi}{\varphi}

\keywords{Hardy space, tube domain with polyhedral cone, iterated Poisson integral,  Lusin--Littlewood--Paley area function,  non-tangential maximal function}
\thanks{{\em Math Subject Classification 2020}:  42B30,  42B25, 32A35.}
\thanks{The author is a member of the 	Gruppo Nazionale per l'Analisi
	Matematica, la Probabilit\`a e le	loro Applicazioni (GNAMPA) of
	the Istituto Nazionale di Alta Matematica (INdAM)}

\begin{document}
	
	\title{A Note on Hardy Spaces on Polyhedral Tube Domains}
	
	\author[M.\ Calzi]{Mattia Calzi} 
	\address{Dipartimento di Matematica, Universit\`a degli Studi di
		Milano, Via C. Saldini 50, 20133 Milano, Italy}
	\email{{\tt mattia.calzi@unimi.it}}
	
	\theoremstyle{definition}
	\newtheorem{deff}{Definition}[section]

	\newtheorem{oss}[deff]{Remark}
	
	\newtheorem{ass}[deff]{Assumptions}
	
	\newtheorem{nott}[deff]{Notation}

	\theoremstyle{plain}
	\newtheorem{teo}[deff]{Theorem}
	
	\newtheorem{lem}[deff]{Lemma}
	
	\newtheorem{prop}[deff]{Proposition}
	
	\newtheorem{cor}[deff]{Corollary}
	
	\begin{abstract}
		We provide a description of the real variable Hardy spaces $H^p$, $p\in (0,1]$ associated to a regular polyhedral open convex cone, devised as real variable analogues of the holomorphic Hardy spaces on the corresponding tube domain.
	\end{abstract}
	
	\maketitle
	
	\section{Introduction}
	
	In~\cite{Grafakosetal}, given a regular polyhedral open convex cone $\Omega \subseteq \R^n$ and the associated tube domain $T\coloneqq \R^n+i \Omega$, the holomorphic Hardy space
	\[
	H^1(T)=\Set{f\in \Hol(T)\colon \sup_{y\in \Omega} \norm{f(\,\cdot\,+i y)}_{L^1(\R^n)}<+\infty}
	\]
	was considered, and several equivalent characterizations, via the unrestricted non-tangential maximal function, the area function, and the $g$-function, were considered. As observed in~\cite[Remark 1.2]{Grafakosetal}, several proofs only relied on the corresponding `real-variable' Hardy spaces on $\R^n$, but the authors were unable to complete the proof of the equivalence of the above-mentioned conditions for these `real-variable' Hardy spaces. We propose to address this gap, adding an additional equivalent condition and considering all real Hardy spaces with exponent $p\in (0,1]$.
	
	The missing step is based on the characterization of (vector-valued) real Hardy spaces on $\R$ via the Hilbert transform, and allows us to provide an additional description of the real Hardy space by means of the Fourier transform, clarifying their structure. The equivalence of the above-mentioned conditions is stated in Propositions\ref{prop:1b},~\ref{prop:2b},~\ref{prop:4b}, and~\ref{prop:5b}.
	
\section{Preliminaries}
	
\subsection{Sobolev Spaces}
	
	Here we introduce some notation for the Sobolev spaces of infinite order and provide some remarks that will be useful in the sequel.
	
	\begin{deff}
		Take $p\in [1,\infty]$ and a Hilbert space $H$. We define $W^{\infty,p}(\R^n;H)$ as the space of $f\in L^p(\R^n;H)$ such that $\partial^\alpha f \in L^p(\R^n;H)$ for every $\alpha\in \N^n$, endowed with the corresponding topology. We define $W^{-\infty,p}(\R^n;H)$ as the dual of the closure of $C^\infty_c(\R^n;H)$ in $W^{\infty,p'}(\R^n;H)$ (that is, the dual of $W^{\infty,p'}(\R^n;H)$ unless $p=1$).
	\end{deff}
	
	\begin{oss}
		Observe that, by the Sobolev embeddings, $W^{\infty,p}(\R^n;H)\subseteq W^{\infty,q}(\R^n;H)$ continuously for $p\meg q$. Arguing by transposition, we then see that $W^{-\infty,p}(\R^n;H)\subseteq W^{-\infty,q}(\R^n;H)$ for $p\meg q$.
	\end{oss}
	
	\begin{oss}
	By means of Young's inequality, it is readily seen that convolution induces continuous bilinear mappings
	\[
	W^{-\infty,p_1}(\R^n;H)\times W^{-\infty,p_2}(\R^n)\to W^{-\infty,p_3}(\R^n;H)
	\]
	and
	\[
	W^{-\infty,p_1}(\R^n;H)\times W^{\infty,p_2}(\R^n)\to W^{\infty,p_3}(\R^n;H)
	\]
	for every $p_1,p_2,p_3\in [1,\infty]$ such that $\frac{1}{p_1'}+\frac{1}{p_2'}=\frac{1}{p_3'}$.
	\end{oss}
	
	\begin{oss}
	Notice that, by~\cite[Theorem XXV of Chapter VI, \S\ 8]{Schwartz}, $W^{-\infty,\infty}(\R^n)$ is the space of bounded distributions and $\bigcup_{q\in [1,\infty)} W^{-\infty,q}(\R^n)$ is the space of distributions which are restricted at infinity, in the terminology of~\cite{Stein}.  
	\end{oss}
	
\subsection{The Classical Case}

In this section, we recall some basic features of the theory of classical real Hardy spaces on $\R$.

Define the non-tangential approach region $\Gamma_\alpha(x)\coloneqq \Set{(y,t)\colon t>0, \abs{y-x}\meg \alpha t}$ for every $\alpha\Meg 0$ and  $x\in \R$.
Define the Poisson kernel $(p_t)_{t\Meg 0}$ so that $p_0=\delta_0$ and $p_t(x)= \frac{t}{\pi(x^2+t^2)}$ for $t>0$ and $x\in \R$. 

Observe that $p_t\in W^{\infty,1}(\R)$ for every $t>0$, so that we may define $f*p_t\in W^{\infty,q}(\R^n;H)$ for every $f\in W^{-\infty,q}(\R^n;H)$, for every (real) Hilbert space $H$, and for every $q\in [1,\infty]$. 

\begin{oss}
	If $f\in W^{-\infty,p}(\R;H)$ for some $p\in [1,\infty)$, then $f*p_t\to 0$ for $t\to +\infty$ in $W^{\infty,\infty}(\R;H)$. 
	
	In fact, we have a continuous bilinear mapping $W^{-\infty,p}(\R;H)\times W^{\infty,p'}(\R)\to W^{\infty,\infty}(\R;H)$ and $p_t\to 0$ for $t\to +\infty$ in $W^{\infty,p'}(\R)$.
\end{oss}

Given $f\in W^{-\infty,\infty}(\R^n;H)$, define the non-tangential (radial, for $\alpha=0$) maximal-function
\[
(\Mc_\alpha f)(x)= \sup_{(y,t)\in\Gamma_\alpha(x) } \abs{(f*p_t)(y)},
\]
the area function
\[
(\Sc_\alpha f)(x)= \bigg(\int_{\Gamma_\alpha(x)} \abs{\nabla(f*p_t)(y)}^2 \,\dd \Hc^2(y,t)\bigg)^{1/2}
\]
for $\alpha>0$ (where, by abuse of notation, $\nabla$ denotes the gradient in the variables $(y,t)$), and the $g$-function
\[
(\Sc_0 f)(x)= \bigg(\int_{\Gamma_0(x)}\abs{\nabla(f*p_t)(x)}^2 \,\dd \Hc^1(t)\bigg)^{1/2}.
\]
Here, $\Hc^1$ and $\Hc^2$ denote the one- and two-dimensional Hausdorff measures (that is, Lebesgue measure on $\R$ and $\R^2$, respectively).

Define $h_\pm \coloneqq \Fc^{-1}\chi_{\pm(0,+\infty)}$, where $\Fc$ denotes the Fourier transform. Observe that $h_++h_-=\delta_0$ and that $h_+-h_-$ is (up to a multiplicative constant) the convolution kernel of the Hilbert transform. In the following result we shall use convolution with $h_\pm$ instead of the Hilbert transform since we wish to emphasize the relation with holomorphic functions. Clearly, $f*h_\pm$ is a distribution with values in the complexification  $H_\C$ of~$H$.

\begin{teo}\label{teo:1b}
	Take $p\in (0,\infty)$, $\alpha,\alpha' \Meg 0$, and a Hilbert space $H$.
	Given $f\in \bigcup_{q\in [1,\infty)} W^{-\infty,q}(\R;H)$, the following conditions are equivalent:
	\begin{enumerate}
		\item[\textnormal{(1)}] $\Mc_\alpha f\in L^p(\R)$;
		
		\item[\textnormal{(2)}] $\Sc_{\alpha'} f \in L^p(\R)$;
		
		\item[\textnormal{(3)}] the $f*h_+*p_t, f*h_-*p_t$, $t\in (0,+\infty)$, are uniformly bounded in $L^p(\R;H_\C)$.
	\end{enumerate}
	More precisely, there is a constant $C>0$ such that
	\[
	\begin{split}
		\norm{\Mc_\alpha f}_{L^p(\R)}&\meg C \norm{\Sc_{\alpha'} f}_{L^p(\R)}\\
		&\meg C^2\sup_{t>0}\Big( \norm{f*h_+*\phi_t}_{L^p(\R;H_\C)}+  \norm{f*h_-*p_t}_{L^p(\R;H_\C)}\Big)\\
		&\meg C^3\norm{\Mc_\alpha f}_{L^p(\R)}
	\end{split}
	\]
	for every $f\in \bigcup_{q\in [1,\infty)} W^{-\infty,q}(\R;H)$. 
\end{teo}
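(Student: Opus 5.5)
The statement is essentially the classical Fefferman--Stein theory of $H^p(\R)$, extended to functions with values in the Hilbert space $H$. The plan is to reduce to the known scalar arguments. Each of them uses only the Hilbert structure of $|\cdot|$ and never the scalar nature of the values.

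The key observation is that, for $f\in W^{-\infty,q}(\R;H)$ with $q<\infty$, the function $u_\pm(x+it)\coloneqq (f*h_\pm*p_t)(x)$ is well defined. On $\C_+$ it is holomorphic in $x+it$, and on $\C_-$ it is antiholomorphic, after reflecting $t\mapsto -t$ for $h_-$. It is $H_\C$-valued, and $f*p_t=u_++u_-$. Indeed $\Fc(f*h_+*p_t)=\chi_{(0,\infty)}e^{-t\abs{\xi}}\Fc f$, which is the Fourier--Laplace form of a holomorphic function. The restriction to $W^{-\infty,q}$ with $q<\infty$ guarantees that $f*p_t\to 0$ as $t\to\infty$ (by the Remark above). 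This rules out constants and polynomial ambiguities, so $f$ is determined by its Poisson extension.

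\emph{Step (3)$\Rightarrow$(1).} If $\sup_t\norm{u_\pm(\cdot+it)}_{L^p}<\infty$, then $\abs{u_\pm}^{r}$ is subharmonic for every $r>0$. For $H_\C$-valued holomorphic $F$, the function $\log\abs{F}$ is subharmonic, since it is the supremum of $\log\abs{\langle F,e\rangle}$ over unit vectors $e$. Taking $r<p$, the standard argument applies. The function $\abs{u_\pm(\cdot+i\eps+it)}^{r}$ is dominated by the Poisson integral of $\abs{u_\pm(\cdot+i\eps)}^r$; this uses decay at infinity, from the $W^{-\infty,q}$ assumption and the mean value inequality. The Hardy--Littlewood maximal theorem in $L^{p/r}$ then gives $\norm{\Mc_\alpha u_\pm}_{L^p}\lesssim\sup_t\norm{u_\pm(\cdot+it)}_{L^p}$, uniformly in $\eps$. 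Letting $\eps\to0$ and summing the two bounds gives (1).

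\emph{Step (1)$\Rightarrow$(2) and (2)$\Rightarrow$(1).} For (1)$\Rightarrow$(2), I would follow Fefferman--Stein: a good-$\lambda$ or Green's-formula argument on sawtooth regions over $\{\Mc_\beta f\meg\lambda\}$, using $\Delta\abs{u}^2=2\abs{\nabla u}^2$ for $H$-valued harmonic $u$. For $\alpha'=0$, one also needs comparison of the $g$-function with the area function, which follows by subharmonicity of $\abs{\nabla u}^2$. The change of aperture $\alpha\leftrightarrow\beta$ is handled by the usual $L^p$ aperture comparison. For (2)$\Rightarrow$(1), the same Fefferman--Stein sawtooth argument works in reverse. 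Here the assumption $u(\cdot,t)\to0$ as $t\to\infty$ is what makes the upper boundary terms vanish.

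\emph{Step (1)$\Rightarrow$(3).} This is the one step needing an external input, and I expect it to be the main obstacle: boundedness of the Hilbert transform (equivalently of $h_\pm*$) on vector-valued $H^p$. I would use the atomic decomposition of the maximal space $H^p(\R;H)$, which also holds for Hilbert-valued functions via the Calderón--Zygmund/Whitney construction. I would then check that $h_\pm*p_t*a$ is uniformly in $L^p$ for every $(p,N)$-atom $a$, by the standard kernel estimate $\abs{\partial^k(h_\pm*p_t)(x)}\lesssim\abs{x}^{-1-k}$ combined with moment cancellation. For $p<1$, the $p$-triangle inequality sums the atoms. The delicate point is justifying convergence of the atomic decomposition in the distributional class $\bigcup_q W^{-\infty,q}$, so that $h_\pm$ can be applied termwise. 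I would first establish the decomposition for $f$ with $f*p_\eps$ in place of $f$ (a smooth element of $W^{\infty,q}$), and then pass to the limit $\eps\to0$ using the uniform bounds.

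Chaining the three steps gives the stated constants, with $C$ depending only on $p,\alpha,\alpha'$.
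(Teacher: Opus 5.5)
\textbf{Gap: the case $\alpha'=0$ of (2)$\Rightarrow$(1).} Your arguments for (3)$\Rightarrow$(1) and for (1)$\Rightarrow$(2) are sound. What your sketch does not cover is $\norm{\Mc_\alpha f}_{L^p(\R)}\lesssim\norm{\Sc_0 f}_{L^p(\R)}$, i.e.\ (2)$\Rightarrow$(1) with the $g$-function. The comparison you derive from subharmonicity of $\abs{\nabla u}^2$ is $\Sc_0 f\lesssim \Sc_\beta f$, which points the wrong way for this implication. Nor can the sawtooth/Green argument be run ``in reverse'' with $\Sc_0 f$. On $E=\{\Sc_0 f\meg \lambda\}$, the quantity $\int_E(\Sc_0 f)^2$ integrates $t\abs{\nabla u}^2$ over the cylinder $E\times(0,\infty)$, not over a sawtooth region to which Green's formula applies.

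This case is not marginal: it is exactly the one invoked in Proposition~\ref{prop:5b}, which the paper takes from the corollaries to Fefferman--Stein's Theorem~9. The missing ingredient is the $L^p$ bound $\norm{\Sc_\beta f}_{L^p}\lesssim\norm{\Sc_0 f}_{L^p}$. In one variable you can get it as follows.
\begin{enumerate}
\item $\partial_x u-i\partial_t u$ is $H_\C$-valued holomorphic with norm $\abs{\nabla u}$, so $\abs{\nabla u}^r$ is subharmonic for every $r>0$. Fix $r<\min(p,2)$.
\item Since $\nabla u$ is bounded on $\{t\Meg\eps\}$, for $\abs{y-x}\meg\beta t$ you get $\abs{\nabla u(y,t)}^r\meg (p_{t/2}*\abs{\nabla u(\,\cdot\,,t/2)}^r)(y)\lesssim M_{\mathrm{HL}}(\abs{\nabla u(\,\cdot\,,t/2)}^r)(x)$, where $M_{\mathrm{HL}}$ is the Hardy--Littlewood maximal operator.
\item Hence
\[
(\Sc_\beta f)(x)^2\lesssim\int_0^{+\infty}\big[M_{\mathrm{HL}}\big(\abs{\nabla u(\,\cdot\,,t/2)}^r\big)(x)\big]^{2/r}\,t\,\dd t .
\]
\item The Fefferman--Stein vector-valued maximal inequality in $L^{p/r}(L^{2/r}(t\,\dd t))$ then gives $\norm{\Sc_\beta f}_{L^p}\lesssim \norm{\Sc_0 f}_{L^p}$.
\end{enumerate}

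\textbf{The case $\alpha=0$.} Passing from the radial to a non-tangential maximal function is not the ``usual aperture comparison'', which needs a positive aperture. It relies on harmonicity and is again part of the Fefferman--Stein theory.

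\textbf{Your route to (1)$\Rightarrow$(3).} Going through vector-valued atomic decompositions is workable but much heavier than the route behind the paper's appeal to Fefferman--Stein's Theorem~9. Set $v(x,t)\coloneqq -i[f*(h_+-h_-)*p_t](x)$; it is the $H$-valued harmonic conjugate of $u$. The Cauchy--Riemann equations give $\abs{\nabla v}=\abs{\nabla u}$, hence $\Sc_{\beta}v=\Sc_{\beta}u$. Since $f*(h_+-h_-)$ lies in some $W^{-\infty,s}$ with $s<\infty$, also $v(\,\cdot\,,t)\to 0$ as $t\to+\infty$. Thus (1)$\Rightarrow$(2) for $u$, followed by (2)$\Rightarrow$(1) for $v$, controls $\Mc_\alpha v$. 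Then $f*h_\pm*p_t=\frac12(u\pm iv)$ gives (3), with no convergence issues for atomic series.
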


Notice that, since $f\in  \bigcup_{q\in [1,\infty)} W^{-\infty,q}(\R;H)$ and $h_-,h_+\in\bigcap_{q\in (1,\infty]} W^{-1,q}(\R)$, both   $f*h_-$, and $f*h_+$ are well defined elements of $\bigcap_{q\in (1,\infty)} W^{-\infty,q}(\R;H)$. 

\begin{proof}
	The equivalence of (1) and (2) follows from~\cite[Corollaries 1--3 to Theorem 9]{FeffermanStein} since, by the previous remarks, $U\colon (x,t)\mapsto (f*p_t)(x)$ is a harmonic function on $\R\times \R_+^*$ which tends to $0$ for $t\to +\infty$ (uniformly in $x$). 
	The equivalence of (1) and (3) then follows from~\cite[Theorem 9]{FeffermanStein}, since $v\colon (x,t)\mapsto [f*(h_+-h_-)*p_t](x)$ is the harmonic conjugate of $u$.
	Notice that, even though the above references are stated for harmonic functions with values in $\R$, it is more or less explicitly observed therein that they hold for functions with values in a general Hilbert space (cf., e.g.,~\cite[Remark 1 of \S\ 7]{FeffermanStein}).
\end{proof}

\section{Results}

Let $e_1,\dots, e_m$ be unit vectors in $\R^n$, with $m\Meg n$, such that $(e_j)_{j\in J}$ is a basis of $\R^n$ for every $J\subseteq\Set{1,\dots, m}$ with $n$ elements. Observe that, in particular,  $\bigcap_{j=1}^m e_j^\perp=\Set{0}$. For every $\mbeps\in \Set{-1,1}^m$, define 
\[
U_{\mbeps}\coloneqq \Set{\lambda \in \R^n\colon \forall j=1,\dots,m \quad \eps_j \langle \lambda, e_j\rangle \Meg 0},
\]
and observe that each $U_{\mbeps}$ is a closed convex cone and that $\R^n=\bigcup_{\mbeps} U_{\mbeps}$. Let $A$ be the set of $\mbeps\in \Set{-1,1}^m$ such that $U_\mbeps$ has a non-empty interior, and observe that $\R^n=\bigcup_{\mbeps\in A} U_\mbeps$, up to a $\Hc^n$-negligible set.

\begin{oss}
	Notice that $A$ may be strictly smaller than $\Set{-1,1}^m$. For example, if we consider the vectors $(0,1), (1,0), 2^{-1/2}(1,1)$, then $A=\Set{\pm(1,1,1), \pm(1,-1,1),\pm(1,-1,-1)  }$.
\end{oss}

For every $\mbeps\in A$, define  $\Omega_\mbeps$ as the interior of the polar $U_\mbeps^\circ$ of $U_\mbeps$, and observe that $\Omega_\mbeps$ is a non-empty regular open convex cone: in fact, $\Omega_\mbeps$ is regular since $U_\mbeps$ has a non-empty interior; in addition, $\Omega_\mbeps$ is non-empty since $U_\mbeps$ does not contain straight lines.

Now, take $\mbeps\in \Set{-1,1}^m$, and define
\[
\pi_\mbeps\colon \R^m\ni (x_1,\dots, x_m)\mapsto \sum_j \eps_j x_j e_j\in \R^n
\]
and, for every $\vect t\in \R_+^m$,
\[
p_{ \vect t}\coloneqq (\pi_{\vect 1_m})_*(p_{t_1}\otimes \cdots \otimes p_{t_m})
\]
(push-forward of integrable measures, where each $p_t$ with $t>0$ is identified with a measure by means of Lebesgue measure). Notice that, since $p_t$ is symmetric for every $t\Meg 0$, we have
\[
p_{\vect t}=(\pi_{\mbeps})_*(p_{t_1}\otimes \cdots \otimes p_{t_m})
\]
for every $\vect t\in \R_+^m$ and for every $\mbeps\in \Set{-1,1}^m$.

\begin{oss}
	$\Omega_\mbeps=\pi_\mbeps((\R_+^*)^m)$ for every $\mbeps\in A$. In fact, by definition, $U_\mbeps$ is the polar of $\bigcup_{j=1,\dots,m} \R_+ \eps_j e_j$ so that, by the bipolar theorem, $U_\mbeps^\circ$ is the closed convex cone generated by $e_1,\dots, e_m$. 
\end{oss}
 
Now, for every $\alpha \Meg 0$ and for every $x\in \R^n$ define the unrestricted non-tangential approach region
\[
\Gamma_\alpha(x)\coloneqq \Set{ \bigg(x+\sum_{j=1}^m \lambda_j e_j,\vect t\bigg)\colon \vect t \in (\R_+^*)^m, \forall j=1,\dots, m\quad \abs{\lambda_j}\meg \alpha t_j}
\]
and, for every $f\in  W^{-\infty,\infty}(\R^n)$, the unrestricted non-tangential maximal function
\[
(\Mc_\alpha f)(x)=\sup_{(y,\vect t)\in \Gamma_\alpha(x)} \abs{(f*p_{\vect t})(y)}.
\] 

Next, define
\[
H_\mbeps\coloneqq \Fc^{-1}(\chi_{U_\mbeps})
\]
for every $\mbeps\in A$. Observe that
\[
\sum_{\eps\in A} \Fc H_\mbeps=1
\]
almost everywhere by the previous remarks, so that $\sum_{\eps\in A} H_\mbeps=\delta_0$.

\begin{oss}\label{oss:1b}
	For every $\eps\in A$, one has $H_\mbeps\in \bigcap_{q\in (1,\infty)}W^{-\infty,q}(\R^n)$.
\end{oss}

Consequently, we may consider $f*H_\mbeps$ for every $f\in \bigcup_{q\in [1,\infty)}W^{-\infty,q}(\R^n;H)$ and for every $\mbeps\in A$; the convolution then still belongs to $\bigcup_{q\in [1,\infty)}W^{\infty,q}(\R^n;H)$. Observe that, for such $f$, we have $f= \sum_{\mbeps\in A} f*H_\mbeps$, a fact which would look quite mysterious from the Fourier transform side, since $\Fc f$ need \emph{not} be a function.

\begin{proof}
	Define $h_{\eps_j,e_j}\in \Sc'(\R^n)$, for every $j=1,\dots, m$, so that
	\[
	\Fc h_{\eps_j,e_j}= \chi_{\eps_j(0,+\infty)}(\langle \,\cdot\,, e_j\rangle ).
	\]
	Then, $\Fc H_\mbeps=\Fc h_{\eps_1,e_1}\cdots \Fc h_{\eps_m,e_m}$. It will therefore suffice to show that $h_{\eps_j,e_j}\in W^{-\infty,q}(\R^n)$ for every $q\in(1,\infty)$ and for every $j=1,\dots, m$. By~\cite[Theorem XXV of Chapter VI, \S\ 8]{Schwartz}, it will suffice to show that $\phi* h_{\eps_j,e_j}\in L^q(\R^n)$ for every $\phi\in C^\infty_c(\R^n)$. Up to a change of coordinates, we may assume that $e_j=(1,0,\dots, 0)$. Then, $h_{\eps_j,e_j}=h_{\eps_j}\otimes \delta_0$, so that the assertion follows from the fact that $h_{\eps_j}\in W^{-1,q}(\R)$.
	
	Notice that, since we now know that $h_{\eps_j,e_j}\in \bigcap_{q\in (1,\infty)}W^{-\infty,q}(\R^n)$ for every $j=1,\dots, m$, we also deduce that $H_\mbeps=h_{\eps_1,e_1}*\cdots* h_{\eps_m,e_m}$.
\end{proof}

\begin{deff}
	Take $\mbeps\in A$ and $p\in (0,\infty]$ and define $T_\mbeps\coloneqq \R^n+i \Omega_\mbeps$ and $H^p(T_\mbeps)$ as the space of holomorphic functions $f\colon T_\mbeps\to \C$ such that
	\[
	\norm{f}_{H^p(T_\mbeps)}\coloneqq \sup_{y\in \Omega_\mbeps} \norm{f(\,\cdot\,+i y)}_{L^p(\R^n)}
	\]
	is finite, endowed with the corresponding quasi-norm.
\end{deff}

The following result generalizes~\cite[Theorems 1.1, 3.1 and 3.2]{Grafakosetal}. The proof is essentially the same.

\begin{prop}\label{prop:1b}
	Take $p\in (0,\infty)$.  Take $f\in  W^{-\infty,\infty}(\R^n)$ so that  $\Fc f$ is supported in $U_\mbeps$. Then, there is a unique holomorphic function $g\colon T_\mbeps\to \C$ such that   
	\[
	g(x+i \pi_\mbeps(\vect t))= (f*p_{\vect t})(x)
	\]
	for every $x\in \R^n$ and for every $\vect t\in (\R_+^*)^m$. In particular,
	\[
	\norm{g}_{H^p(T_\mbeps)}=\sup_{\vect t \in (\R_+^*)^m} \norm{f*p_{\vect t}}_{L^p(\R^n)}.
	\]
\end{prop}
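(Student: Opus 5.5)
The plan is to work on the Fourier side and obtain $g$ as a Fourier--Laplace transform of $\Fc f$. First compute $\Fc p_{\vect t}$. Since $\Fc p_t(\xi)=\ee^{-t\abs{\xi}}$ (with the normalization $\Fc\phi(\xi)=\int \phi(x)\ee^{-i\langle \xi,x\rangle}\,\dd x$, up to harmless constants) and $p_{\vect t}$ is a push-forward under $\pi_{\vect 1_m}$, we get
\[
\Fc p_{\vect t}(\lambda)=\prod_{j=1}^m \ee^{-t_j\abs{\langle \lambda,e_j\rangle}} .
\]
On $U_\mbeps$ we have $\abs{\langle\lambda,e_j\rangle}=\eps_j\langle\lambda,e_j\rangle$, so
\[
\Fc p_{\vect t}(\lambda)=\ee^{-\langle \lambda,\pi_\mbeps(\vect t)\rangle} \qquad\text{for }\lambda\in U_\mbeps .
\]
The key point is that this only depends on $y=\pi_\mbeps(\vect t)\in\Omega_\mbeps$. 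The map $\pi_\mbeps$ is onto $\Omega_\mbeps$ by the Remark above, but it is not injective when $m>n$.

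Next, define $g(x+iy)$ for $y\in\Omega_\mbeps$ by
\[
g(\cdot+iy)\coloneqq f*P_y, \qquad \Fc P_y=\psi\,\ee^{-\langle\cdot,y\rangle}.
\]
Here $\psi$ is a smooth function equal to $1$ on a neighbourhood of $U_\mbeps$ and supported in a slightly larger cone on which $\langle\lambda,y\rangle\Meg c\abs{\lambda}$ locally uniformly in $y$. Such a cone exists because $y$ lies in the interior of the dual cone of $U_\mbeps$, and the resulting $P_y$ is a Schwartz-type function. Since $\Fc f$ is supported in $U_\mbeps$, the cut-off $\psi$ does not change the product. Hence
\[
f*P_{\pi_\mbeps(\vect t)}=f*p_{\vect t},
\]
because the two kernels have Fourier transforms agreeing on a neighbourhood of $\supp \Fc f$. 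This needs $\Fc p_{\vect t}$ to be only Lipschitz near $\partial U_\mbeps$, so I would rather argue via the convolution identity: $\Fc(f*p_{\vect t})=\Fc f\cdot\Fc p_{\vect t}$ makes sense because $\Fc p_{\vect t}$ is smooth away from the hyperplanes $e_j^\perp$. This is the main obstacle. Near boundary points, $\Fc p_{\vect t}$ and $\ee^{-\langle\cdot,y\rangle}$ coincide only on $U_\mbeps$ itself, not on a neighbourhood. I would handle it by writing $f=\lim f*\rho_k$ with $\rho_k$ chosen so that $\Fc\rho_k$ is supported in translates $U_\mbeps+\eta_k$, $\eta_k\to0$ inside the interior of $U_\mbeps$. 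Then $\supp\Fc(f*\rho_k)$ lies in the interior of $U_\mbeps$, where the identity is clear, and one passes to the limit in $W^{\infty,\infty}$ using the continuity of convolution from the Remarks.

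Holomorphy in $z=x+iy$ follows by writing $g(z)=\langle \Fc f, \psi\,\ee^{i\langle\cdot,z\rangle}\rangle$ (suitably normalized). The function $z\mapsto\psi\,\ee^{i\langle\cdot,z\rangle}$ is holomorphic with values in $\Sc(\R^n)$ for $\Im z\in\Omega_\mbeps$, thanks to the exponential decay on $\supp\psi$. Uniqueness holds because $T_\mbeps$ is connected and $\pi_\mbeps((\R_+^*)^m)=\Omega_\mbeps$, so the values of $g$ are prescribed everywhere; well-definedness under non-injectivity of $\pi_\mbeps$ is exactly the fact that $f*p_{\vect t}$ depends only on $\pi_\mbeps(\vect t)$. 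Finally, the norm identity is immediate: as $\vect t$ ranges over $(\R_+^*)^m$, the point $\pi_\mbeps(\vect t)$ ranges over all of $\Omega_\mbeps$, so the two suprema coincide.
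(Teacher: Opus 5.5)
Your overall framework is fine: defining $g$ directly as the Fourier--Laplace transform $\langle \Fc f,\psi\,\ee^{i\langle\cdot,z\rangle}\rangle$ with a cut-off, and then obtaining uniqueness and the norm identity from $\pi_\mbeps((\R_+^*)^m)=\Omega_\mbeps$. One harmless correction: $\psi$ cannot equal $1$ near $U_\mbeps$ and also be supported in a cone, so its support must be allowed to contain a ball around $0$.

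The gap is in the step you single out as the main obstacle, the identity $f*P_{\pi_\mbeps(\vect t)}=f*p_{\vect t}$. Both the well-definedness of $g$ when $m>n$ and the displayed formula rest on it. Convolving with $\rho_k$ multiplies $\Fc f$ by $\Fc\rho_k$. This cannot move the spectrum into the interior of $U_\mbeps$; it can only cut off the part of $\supp\Fc f$ lying in $U_\mbeps\setminus(U_\mbeps+\eta_k)$, which is exactly the part near $\partial U_\mbeps$ that causes the difficulty. So $f=\lim_k f*\rho_k$ fails whenever $\Fc f$ charges $\partial U_\mbeps$. For instance, $f\equiv 1$ lies in $W^{-\infty,\infty}(\R^n)$ with $\Fc f=(2\pi)^n\delta_0$ and $0\in\partial U_\mbeps$. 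Yet $f*\rho_k=(\Fc\rho_k)(0)=0$ for every $k$, because $0\notin U_\mbeps+\eta_k$. The same happens for $\ee^{i\langle\xi_0,\cdot\rangle}$ with $\xi_0\in\partial U_\mbeps$. As written, your argument only covers $f$ whose spectrum stays away from the boundary.

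The remedy is to transport the spectrum rather than truncate it.
\begin{itemize}
\item You could modulate. The function $f_k\coloneqq \ee^{i\langle\eta_k,\cdot\rangle}f$ has $\supp\Fc f_k\subseteq U_\mbeps+\eta_k$, which is at positive distance from $\R^n\setminus U_\mbeps$. Moreover, $(f_k*p_{\vect t})(x)=\langle f,\ee^{i\langle\eta_k,\cdot\rangle}p_{\vect t}(x-\cdot)\rangle\to(f*p_{\vect t})(x)$ locally uniformly. You would still owe a short justification of the identity for $f_k$, since $\Fc p_{\vect t}$ is only Lipschitz globally.
\item The paper instead multiplies $f$ by $\psi_k=(\Fc^{-1}\phi)(2^{-k}\cdot)$ with $\supp\phi\subseteq U_\mbeps$. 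Then $\Fc(f\psi_k)$ is a multiple of $\Fc f*\Fc\psi_k$, still supported in $U_\mbeps$ because $U_\mbeps+U_\mbeps\subseteq U_\mbeps$. Since $f\psi_k\in W^{-\infty,2}(\R^n)$, $\Fc(f\psi_k)$ is a locally integrable function. As $\partial U_\mbeps$ is Lebesgue-null, agreement of $\Fc p_{\vect t}$ with $\ee^{-\langle\cdot,\pi_\mbeps(\vect t)\rangle}$ on $U_\mbeps$ alone suffices. Then $g_k$ is given by an absolutely convergent integral over $U_\mbeps$, and one lets $k\to\infty$ using $\psi_k\, p_{\vect t}(x-\cdot)\to p_{\vect t}(x-\cdot)$.
\end{itemize}
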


\begin{proof}
	Assume first that $f\in W^{-\infty,2}(\R^n)$, and define $g$ so that
	\[
	g(x+i y)\coloneqq \frac{1}{(2\pi)^n}\int_{U_{\mbeps}} (\Fc f)(\xi) \ee^{i\langle \xi, x+i y\rangle}\,\dd \Hc^n(\xi)
	\]
	for every $x+ i y\in T_\mbeps$; notice that the integral makes sense since $(1+\abs{\,\cdot\,}^{2N})\Fc f\in L^2(\R^n)$ for some $N\in\N$.  It is readily seen that $g$ is holomorphic. Since, in addition,  $(\Fc p_{\vect t})(\xi)=\ee^{i\langle \xi, x+i \pi_\mbeps(\vect t)\rangle} $ for every $\xi\in U_\mbeps$, it is clear that $g(x+i \pi_\mbeps(\vect t))= (f*p_{\vect t})(x)$	for every $x\in \R^n$ and for every $\vect t\in (\R_+^*)^m$, whence the conclusion in this case.
	
	Now, assume that $f\in W^{-\infty,\infty}(\R^n)$. Take $\phi\in C^\infty_c(\R^n)$ so that $\supp \phi\subseteq U_\mbeps$, define $\psi_k\coloneqq (\Fc^{-1} \phi)(2^{-k}\,\cdot\,)$, and observe that $f \psi_k\in W^{-\infty,2}(\R^n)$ and that $\Fc(f\psi_k)=\Fc f* \Fc \psi_k$ is still supported in the cone $U_\mbeps$ for every $k\in\N$. We may then define  holomorphic functions $g_k$  as above so that
	\[
	g_k(x+ i \pi_\mbeps(\vect t))=((f \psi_k)*p_{\vect t})(x)=\langle f, \psi_k p_{\vect t}(x-\,\cdot\,)\rangle
	\]
	for every $x\in \R^n$, for every $\vect t \in (\R_+^*)^m$, and for every $k\in\N$. Since clearly $\psi_k p_{\vect t}(x-\,\cdot\,)$ converges to $ p_{\vect t}(x-\,\cdot\,)$ in $W^{1,1}(\R^n)$, uniformly as $(x,\vect t)$ runs through a compact subset of $\R^n\times (\R_+^*)^m$, it is readily seen that $g_k$ converges locally uniformly to a holomorphic function $g$. The assertion follows.
\end{proof}

Now, define the unrestricted non-tangential  maximal function
\[
(\Mc_\alpha f)(x)\coloneqq \sup_{(y,\vect t)\in\Gamma_\alpha(x)} \abs{(f*p_{\vect t})(y)}
\]
the area function
\[
(\Sc_\alpha f)(x)\coloneqq \bigg(\int_{\Gamma_\alpha(x)} \abs{\nabla_1\cdots \nabla_m (f*p_{\vect t})(y)}^2\frac{t_1\cdots t_m}{\Hc^n(R(x,\vect t))}\,\dd \Hc^{n+m}(y,\vect t)  \bigg)^{1/2}
\]
for $\alpha>0$, and the $g$-function
\[
(\Sc_0 f)(x)\coloneqq \bigg(\int_{\Gamma_0(x)} \abs{\nabla_1\cdots \nabla_m (f*p_{\vect t})(x)}^2 t_1\cdots t_m\,\dd \Hc^{m}(\vect t)  \bigg)^{1/2}.
\]
Here, by some abuse of notation, $\nabla_j=(\partial_{e_j},\partial_{t_j})$ for every $j=1,\dots,m$, while
\[
R(x,\vect t)=\Set{x+\sum_{j=1}^m \lambda_j e_j\colon \forall j=1,\dots, m\quad \abs{\lambda_j}\meg t_j}
\]
for $x\in \R^n$ and $\vect t\in (\R_+^*)^m$.

\begin{prop}\label{prop:2b}
	Take $p\in (0,\infty)$ and $\alpha\Meg 0$. Then, there is a constant $C>0$ such that 
	\[
	\norm{\Mc_\alpha f}_{L^p(\R^n)}\meg C \sum_{\mbeps\in A}\sup_{\vect t\in (\R_+^*)^m}\norm{ f*H_\mbeps*p_{\vect t}}_{L^p(\R^n)}
	\] 
	for every $f\in \bigcup_{q\in [1,\infty)}W^{-\infty,q}(\R^n)$.
\end{prop}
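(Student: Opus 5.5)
Since $f=\sum_{\mbeps\in A} f*H_\mbeps$ (Remark~\ref{oss:1b} and the identity $\sum_{\mbeps\in A}H_\mbeps=\delta_0$), and since $\Mc_\alpha$ is sublinear with $\norm{\cdot}_{L^p}^{\min(p,1)}$ subadditive, it suffices to prove that
\[
\norm{\Mc_\alpha (f*H_\mbeps)}_{L^p(\R^n)}\meg C\sup_{\vect t\in(\R_+^*)^m}\norm{f*H_\mbeps*p_{\vect t}}_{L^p(\R^n)}
\]
for each $\mbeps\in A$. Fix $\mbeps$ and set $F\coloneqq f*H_\mbeps$. Then $F\in\bigcup_{q\in[1,\infty)}W^{-\infty,q}(\R^n)\subseteq W^{-\infty,\infty}(\R^n)$, and $\Fc F$ is supported in $U_\mbeps$. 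By Proposition~\ref{prop:1b}, $(x,\vect t)\mapsto (F*p_{\vect t})(x)$ is therefore the restriction of a holomorphic $g\in H^p(T_\mbeps)$ to the points $x+i\pi_\mbeps(\vect t)$. Its $H^p$ quasi-norm equals the right-hand side. The task thus reduces to the maximal inequality for holomorphic Hardy spaces on the polyhedral tube, as in~\cite{Grafakosetal}. I would prove it by an iterated one-variable argument rather than quoting it.

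The key step is to view $F*p_{\vect t}$ as an iterated Poisson integral along the directions $e_1,\dots,e_m$. Write $p_{\vect t}=p_{t_1,e_1}*\cdots*p_{t_m,e_m}$, where $p_{t,e}$ is the push-forward of $p_t$ under $s\mapsto se$. Along each line $x+\R e_j$, the function $s\mapsto F*p_{\vect t}(x+se_j)$ is the Poisson extension in $(s,t_j)$ of a function of $s$ whose Fourier support lies in $\eps_j(0,+\infty)$. This is because $\Fc F$ is supported where $\eps_j\langle\xi,e_j\rangle\Meg0$. In that case it is the boundary value of a holomorphic function in the variable $s+i\eps_jt_j$. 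Consequently $\abs{F*p_{\vect t}}^{r}$, for $0<r<p$, is subharmonic in $(s,t_j)$ and is dominated by the Poisson integral in $t_j$ of its values at smaller heights. This is the standard Hardy--Littlewood argument: subharmonicity of $\abs{g}^r$ on the line-parameter half-plane.

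Iterating in $j=1,\dots,m$, I would obtain a pointwise bound of the form
\[
\abs{F*p_{\vect t}(y)}^r\meg \big(P^{(1)}_{t_1}\cdots P^{(m)}_{t_m}\abs{F*p_{\vect\eta}}^r\big)(y)
\]
for $\vect\eta\meg \vect t$ componentwise. Here $P^{(j)}$ is the Poisson semigroup in direction $e_j$. Since $\abs{\lambda_j}\meg\alpha t_j$, the supremum over $\Gamma_\alpha(x)$ is then controlled by $C_\alpha\,M_{e_1}\cdots M_{e_m}(\abs{F*p_{\vect\eta}}^r)(x)$, where $M_{e_j}$ is the one-dimensional Hardy--Littlewood maximal operator along $e_j$. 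Each $M_{e_j}$ is bounded on $L^{p/r}(\R^n)$ with $p/r>1$, by Fubini, so
\[
\norm{\sup_{\Gamma_\alpha(x)}\abs{F*p_{\vect t+\vect\eta}}}_{L^p}\meg C\norm{F*p_{\vect\eta}}_{L^p}\meg C\sup_{\vect t}\norm{F*p_{\vect t}}_{L^p}.
\]
Letting $\vect\eta\to0$ by monotone convergence gives the claim.

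I expect the main obstacle to be justifying the one-dimensional subharmonic majorization for a fixed $\vect\eta>0$. This requires that the restriction to each line of $F*p_{\vect\eta}$ lie in a one-variable Hardy space with a uniform $L^p$ bound. I would handle it by a Fubini argument. For a.e.\ line, the holomorphic function $s+i\eps_j\tau\mapsto F*p_{\vect\eta+\tau\delta_j}(x+se_j)$ is, by Proposition~\ref{prop:1b}, the restriction of $g$ to a complex line inside $T_\mbeps$, since $\pi_\mbeps(\vect\eta+\tau\delta_j)$ stays in $\Omega_\mbeps$. It is bounded (because $F*p_{\vect\eta}\in W^{\infty,\infty}$) and decays as $\tau\to\infty$, so the Poisson representation of $\abs{\cdot}^r$ on the half-plane holds for it. Integrating the $L^p$ norms over lines then uses only $\sup_{\vect t}\norm{F*p_{\vect t}}_{L^p}<\infty$, and no a priori integrability beyond that.
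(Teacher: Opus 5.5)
Your proposal is correct and follows essentially the same route as the paper: decompose $f=\sum_{\mbeps\in A}f*H_\mbeps$, invoke Proposition~\ref{prop:1b}, use (pluri)subharmonicity of $\abs{g}^r$ along the complex lines in the directions $e_j$ to get $\abs{F*p_{\vect t+\vect\eta}}^r\meg \abs{F*p_{\vect\eta}}^r*p_{\vect t}$, and conclude with the iterated one-dimensional maximal operator (the paper's $\Mc_{\mathrm{it}}$). Your handling of the limit $\vect\eta\to0$ is more careful than the paper's; two small corrections: in your first display the Poisson parameters should be $t_j-\eta_j$, and the a.e.-line/Fubini concern is unnecessary, since boundedness and continuity of $F*p_{\vect\eta}$ already give the half-plane majorization on every line.
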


\begin{proof}
	Observe that, by the previous remarks,  $f=\sum_{\mbeps\in A} f*H_\mbeps$. Therefore, it will suffice to prove the assertion assuming that $f=f*H_\mbeps$ and $\sup_{\vect t\in (\R_+^*)^m}\norm{ f*p_{\vect t}}_{L^p(\R^n)}<+\infty$ for some $\mbeps\in A$. Then, Proposition~\ref{prop:1b} shows that there is a unique  $g\in H^p(T_\mbeps)$ such that
	\[
	g(x+i \pi_\mbeps(\vect t))=(f*p_{\vect t})(x)
	\]
	for every $x\in \R^n$ and for every $\vect t\in (\R_+^*)^m$. Take $q\in (0,\min(1,p))$. By the plurisubharmonicity of $\abs{g}^q$, we then see that
	\[
	\begin{split}
	\abs{(f*p_{\vect t+\vect t'})(x)}^q&=\abs{g(x+i \pi_\mbeps(\vect t+\vect t'))}^q\\
		&\meg(\abs{g(\,\cdot\,+i \pi_\mbeps(\vect t+(0, t'_2,\dots, t'_m)))}^q*p_{(t'_1,0,\dots,0)})(x)\\
		&\meg \dots\\
		&\meg (\abs{g(\,\cdot\,+i \pi_\mbeps(\vect t))}^q*p_{\vect t'})(x)\\
		&=(\abs{f*p_{\vect t}}^q*p_{\vect t'})(x)
	\end{split}
	\]
	for every $x\in \R^n$ and for every $\vect t,\vect t'\in (\R_+^*)^m$. Consequently, there is a constant $C_1>0$ such that
	\[
	(\Mc_\alpha f)^q \meg C_1\Mc_{\mathrm{it}} \abs{f}^q,
	\]
	where $\Mc_{\mathrm{it}}$ denotes the iterated Hardy--Littlewood maximal function defined in~\cite[Section 3.1]{Grafakosetal}.
	The assertion then follows using the boundedness of the iterated maximal function.
\end{proof}

\begin{prop}\label{prop:3b}
	Take $p\in (0,\infty)$ and $\alpha,\alpha'> 0$. Then, there is a constant $C>0$ such that 
	\[
	\norm{\Mc_\alpha f}_{L^p(\R^n)}\meg C\norm{\Mc_{\alpha'} f}_{L^p(\R^n)}
	\] 
	for every $f\in W^{-\infty,\infty}(\R^n)$.
\end{prop}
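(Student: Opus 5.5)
This is the multiparameter analogue of the classical fact that non-tangential maximal functions with different apertures have comparable $L^p$ norms. We may assume $\alpha>\alpha'$, since for $\alpha\meg\alpha'$ the pointwise inequality $\Mc_\alpha f\meg \Mc_{\alpha'}f$ holds trivially. The plan is the Fefferman--Stein level-set argument, with the iterated (strong) Hardy--Littlewood maximal function $\Mc_{\mathrm{it}}$ of~\cite[Section 3.1]{Grafakosetal} in place of the usual one.

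\textbf{Step 1: a geometric covering.} Fix $\lambda>0$ and set $E_\lambda\coloneqq\Set{x\colon (\Mc_{\alpha'}f)(x)>\lambda}$; this set is open because $(y,\vect t)\mapsto (f*p_{\vect t})(y)$ is continuous. Suppose $(\Mc_\alpha f)(x)>\lambda$. Then there is $(y,\vect t)\in\Gamma_\alpha(x)$ with $\abs{(f*p_{\vect t})(y)}>\lambda$. Every $z\in y+\Set{\sum_j\mu_je_j\colon \abs{\mu_j}\meg\alpha' t_j}$ has $(y,\vect t)\in\Gamma_{\alpha'}(z)$, so $z\in E_\lambda$. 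This set contains a parallelepiped-type set $P\coloneqq R(y,\alpha'\vect t)$ in the sense of the definition of $R$. We need it to occupy a fixed proportion of the enlarged set $R(x,(\alpha+\alpha')\vect t)\supseteq P$, which contains $x$.

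\textbf{Step 2: the volume comparison.} This is the expected main obstacle. Because $m\Meg n$, the sets $R(x,\vect t)$ are zonotopes (Minkowski sums of segments), not parallelepipeds. We need
\[
\Hc^n(R(x,c\vect t))\asymp_c \Hc^n(R(x,\vect t))
\]
uniformly in $\vect t$. This follows from homogeneity: $R(x,c\vect t)=x+c(R(0,\vect t))$, so the ratio is exactly $c^n$. Consequently $x\in R(x,(\alpha+\alpha')\vect t)$ and
\[
\frac{\Hc^n\big(E_\lambda\cap R(x,(\alpha+\alpha')\vect t)\big)}{\Hc^n\big(R(x,(\alpha+\alpha')\vect t)\big)}\Meg \Big(\frac{\alpha'}{\alpha+\alpha'}\Big)^n\eqqcolon\gamma .
\]
We must also ensure that the averaging operator over all zonotopes $R(x,\vect t)$, $\vect t\in(\R_+^*)^m$, is dominated by $\Mc_{\mathrm{it}}$. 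To do this, we compare $\chi_{R(0,\vect t)}/\Hc^n(R(0,\vect t))$ with a multiple of the measure $(\pi_{\vect 1_m})_*\big(\bigotimes_j \tfrac{1}{2t_j}\chi_{[-t_j,t_j]}\big)$, which is the averaging kernel implicit in $\Mc_{\mathrm{it}}$. I would verify that this pushforward has density $\gtrsim 1/\Hc^n(R(0,\vect t))$ on $R(0,\vect t/2)$, using that every $n$-subset of the $e_j$ is a basis. If that comparison proves delicate, it suffices to use that $\Mc_{\mathrm{it}}$ is bounded on $L^r$ for every $r>1$ and dominates the zonotope maximal operator, which is how~\cite{Grafakosetal} uses it.

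\textbf{Step 3: conclusion.} Step 1 gives $\Set{\Mc_\alpha f>\lambda}\subseteq\Set{\Mc_{\mathrm{it}}\chi_{E_\lambda}>c\gamma}$. The $L^2$ boundedness of $\Mc_{\mathrm{it}}$ and Chebyshev's inequality then give $\Hc^n(\Set{\Mc_\alpha f>\lambda})\meg C\,\Hc^n(E_\lambda)$. Integrating this estimate against $p\lambda^{p-1}\,\dd\lambda$ yields $\norm{\Mc_\alpha f}_{L^p}^p\meg C\norm{\Mc_{\alpha'}f}_{L^p}^p$.

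Note that no harmonicity, holomorphy, or decay of $f$ is used, only continuity of $f*p_{\vect t}$. This is consistent with the statement being formulated for all $f\in W^{-\infty,\infty}(\R^n)$.
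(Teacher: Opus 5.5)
Your scheme is the standard aperture-change argument, which is what the paper's one-line proof (a reference to Lemma~3.1 of Grafakos et al.) relies on: open level sets $E_\lambda$, the inclusion $R(y,\alpha'\vect t)\subseteq E_\lambda$, a lower bound for a maximal average of $\chi_{E_\lambda}$ at $x$, then the $L^2$ bound for $\Mc_{\mathrm{it}}$, Chebyshev and the layer-cake formula. Steps~1 and~3 are correct.

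The one unproved point is in Step~2. You need the centred zonotope averages $\Hc^n(R(x,\vect t))^{-1}\int_{R(x,\vect t)}\abs{g}\,\dd\Hc^n$ to be dominated by $C\,(\Mc_{\mathrm{it}}g)(x)$. You call this the main obstacle but only announce it, and your fallback simply restates it. The claim is true, but general position of the $e_j$ plays no role; Brunn--Minkowski is what makes it work:
\begin{itemize}
\item For $m>n$, put $B=\prod_j[-t_j,t_j]$ and $\phi(z)=\Hc^{m-n}(B\cap\pi_{\vect 1_m}^{-1}(z))$. The density of $(\pi_{\vect 1_m})_*(\Hc^m(B)^{-1}\chi_B)$ is $\phi/\int\phi$.
\item By Brunn--Minkowski, $\phi^{1/(m-n)}$ is concave and even on $R(0,\vect t)=\pi_{\vect 1_m}(B)$. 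Hence $\phi(0)=\max\phi$ and $\phi\Meg 2^{-(m-n)}\phi(0)$ on $R(0,\vect t/2)$.
\item Since $\int\phi\meg\phi(0)\Hc^n(R(0,\vect t))$, the density is at least $2^{-(m-n)}/\Hc^n(R(0,\vect t))$ on $R(0,\vect t/2)$.
\item Consequently, averages over $R(x,\vect t/2)$ are at most $2^m$ times the iterated averages over $B$, hence at most $2^m\Mc_{\mathrm{it}}$.
\end{itemize}

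It is simpler to avoid zonotopes altogether and argue in the parameter space $\R^m$, where every set involved is a box. Write $y=x+\sum_j\lambda_je_j$ with $\abs{\lambda_j}\meg\alpha t_j$, and let $Q=\prod_j[-(\alpha+\alpha')t_j,(\alpha+\alpha')t_j]$. Every $\vect s$ in the box $\prod_j[\lambda_j-\alpha't_j,\lambda_j+\alpha't_j]\subseteq Q$ satisfies $x+\sum_js_je_j\in R(y,\alpha'\vect t)\subseteq E_\lambda$. Therefore $\Hc^m(Q)^{-1}\int_Q\chi_{E_\lambda}(x+\sum_js_je_j)\,\dd\vect s\Meg(\alpha'/(\alpha+\alpha'))^m$. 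By Tonelli this average is a composition of one-dimensional averages along $e_1,\dots,e_m$, so by monotonicity it is at most $(\Mc_{\mathrm{it}}\chi_{E_\lambda})(x)$. This gives the inclusion of Step~3 with $\gamma=(\alpha'/(\alpha+\alpha'))^m$ and no volume comparison at all.
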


\begin{proof}
	The proof is similar to that of~\cite[Lemma 3.1]{Grafakosetal}.
\end{proof}

\begin{prop}\label{prop:4b}
	Take $p\in (0,1]$ and $\alpha,\alpha'> 0$. Then, there is a constant $C>0$ such that 
	\[
	\norm{\Sc_\alpha f}_{L^p(\R^n)}\meg C\norm{\Mc_{\alpha'} f}_{L^p(\R^n)}
	\] 
	for every $f\in \bigcup_{q\in (1,\infty)} W^{-\infty,q}(\R^n)$. In addition, $\Sc_0 f \meg C\Sc_\alpha f$.
\end{prop}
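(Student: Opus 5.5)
The plan is to follow the scheme of~\cite[Section 3]{Grafakosetal}: reduce to a one-variable-at-a-time (product) estimate, using the vector-valued classical Theorem~\ref{teo:1b} iteratively. The restriction $p\meg 1$ is what makes the atomic/tent-space style arguments and the final comparisons available.

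First I would handle the pointwise inequality $\Sc_0 f\meg C\Sc_\alpha f$. Write $F(y,\vect t)=\nabla_1\cdots\nabla_m(f*p_{\vect t})(y)$. Since $f*p_{\vect t}$ is harmonic separately in each pair $(\lambda_j,t_j)$, where $y=x+\sum_j\lambda_j e_j$, the function $F$ is harmonic in each pair of variables separately. Applying the mean value property iteratively on products of discs of radius $c\,t_j$ centred at $(0,t_j)$ with $c<\alpha$ gives
\[
\abs{F(x,\vect t)}^2\meg C\,\frac{1}{t_1^2\cdots t_m^2}\int\abs{F}^2.
\]
Integrating in $\vect t$ against $t_1\cdots t_m\,\dd\Hc^m(\vect t)$ and exchanging integrals, the region swept out lies in $\Gamma_\alpha(x)$. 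The weight $t_1\cdots t_m/\Hc^n(R(x,\vect t))$ dominates $t_1\cdots t_m/(t_1\cdots t_m)$ up to constants, because $R(x,\vect t)$ is the image of a box under $\pi_{\vect 1_m}$. One must check carefully that the change from the $\Hc^{n+m}$ measure on $\Gamma_\alpha(x)$ (parametrized by $\lambda\in\R^m$ only through $\pi$) matches. This is where the normalization by $\Hc^n(R(x,\vect t))$ enters: the fibres of $\pi$ over $y$ have measure comparable to $\Hc^m(\text{box})/\Hc^n(R)$. Hence $\Sc_0 f\meg C\Sc_\alpha f$.

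For the main estimate, I would first use the decomposition $f=\sum_{\mbeps\in A} f*H_\mbeps$. However, $\Mc_{\alpha'}$ does not obviously control the pieces, so instead I would work directly on $f$. Following~\cite{Grafakosetal}, I would lift to $\R^m$. Set $\tilde f$ to be a suitable function on $\R^m$ with $\Mc_{\alpha'}f(\pi(\lambda)+x)$ related to product-maximal functions of $\tilde f$. More concretely, I would fix the directions and use that $f*p_{\vect t}=((\cdots(f*p^{(1)}_{t_1})*\cdots)*p^{(m)}_{t_m})$, where $p^{(j)}_{t}=(\lambda\mapsto\lambda e_j)_*p_t$. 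Then I would apply Theorem~\ref{teo:1b} one direction at a time, in its vector-valued form with $H=L^2$ of the remaining variables (the area integral in the already-treated directions). Schematically, step $j$ replaces the non-tangential maximal function in direction $e_j$ by the area function in $(\lambda_j,t_j)$, with the remaining directions kept as a Hilbert-valued area function or a maximal function. To pass a maximal function in the other directions through the step, I would use Proposition~\ref{prop:3b} to adjust apertures and the iterated maximal function $\Mc_{\mathrm{it}}$. The step from $p<1$ to the mixed quantities would use Fubini on lines parallel to $e_j$, and the one-dimensional bound $\norm{\Sc_{\alpha'}u}_{L^p}\meg C\norm{\Mc_\alpha u}_{L^p}$ from Theorem~\ref{teo:1b} applied to the $H$-valued restrictions.

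The main obstacle is this induction. The product structure does not commute cleanly, because a supremum over $t_j$ inside and a square integral over $t_k$ cannot simply be interchanged. One needs a vector-valued inequality of the form $\norm{\Sc^{(j)}(\sup_{k}\abs{\cdot})}\lesssim\norm{\sup_k\Mc^{(j)}}$, i.e.\ Theorem~\ref{teo:1b} with values in $\ell^\infty$-type spaces, which is not Hilbert. What I would try first is to avoid this by a good-$\lambda$/atomic argument valid for $p\meg 1$. Use $\Mc_{\alpha'}f\in L^p$ and Proposition~\ref{prop:2b}'s converse direction through the Fourier decomposition to reduce to $f=f*H_\mbeps$. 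Then use the holomorphic extension of Proposition~\ref{prop:1b} and the plurisubharmonicity trick of Proposition~\ref{prop:2b}, which converts maxima into iterated Poisson averages that do commute with the Hilbert-valued area integrals. Finally, conclude by repeated vector-valued application of Theorem~\ref{teo:1b} with $H$ a Hilbert space of square-integrable functions.
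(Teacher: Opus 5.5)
\textbf{Verdict: the main inequality $\norm{\Sc_\alpha f}_{L^p}\meg C\norm{\Mc_{\alpha'}f}_{L^p}$ has a genuine gap; your fallback argument for it is circular.} The pointwise bound $\Sc_0 f\meg C\Sc_\alpha f$ is essentially fine.

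\textbf{The pointwise bound.} Separate subharmonicity of $\abs{\nabla_1\cdots\nabla_m(f*p_{\vect t})}^2$ in each pair $(\lambda_j,t_j)$, the iterated sub-mean-value inequality and Fubini give it directly for every $f$. The paper instead cites \cite[Proposition 6.1]{Grafakosetal} for $f\in L^1$ and extends by approximation. Two details need fixing:
\begin{enumerate}
\item[(a)] When $m>n$, $t_1\cdots t_m/\Hc^n(R(x,\vect t))$ is \emph{not} comparable to $1$, since the homogeneities differ. What you need, and what is true, is that the push-forward of Lebesgue measure on $\prod_j[-ct_j,ct_j]$ under $\lambda\mapsto x+\sum_j\lambda_je_j$ has density \emph{bounded above} by $C\,t_1\cdots t_m/\Hc^n(R(x,\vect t))$. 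It is not comparable to this near the boundary of the zonotope.
\item[(b)] The radius must satisfy $c<1$ and $c/(1-c)\meg\alpha$, not merely $c<\alpha$.
\end{enumerate}

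\textbf{The main inequality.} You correctly note that the direction-by-direction use of Theorem~\ref{teo:1b} fails. The reason is that $\Mc_{\alpha'}$ is a simultaneous supremum over all $\vect t$, so it would require Theorem~\ref{teo:1b} for $\ell^\infty$-valued functions. Your fallback then reduces to $f=f*H_\mbeps$ via ``the converse of Proposition~\ref{prop:2b}'', namely $\sup_{\vect t}\norm{f*H_\mbeps*p_{\vect t}}_{L^p}\meg C\norm{\Mc_{\alpha'}f}_{L^p}$. That estimate is not available. In the paper it is obtained only by composing Proposition~\ref{prop:4b}, the statement you are proving, with Proposition~\ref{prop:5b}. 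So the argument is circular. The boundedness of the cone multipliers $H_\mbeps$ on the maximal-function Hardy space for $p\meg 1$ is exactly the non-trivial content of the characterization, and your sketch does not supply it.

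Your last step is also off target even granting the reduction. The plurisubharmonicity trick of Proposition~\ref{prop:2b} bounds $\Mc_\alpha$ \emph{from above} by an iterated maximal function. It gives no upper bound on an area integral.

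\textbf{The missing idea.} Compare $\Sc_\alpha f$ and $\Mc_{\alpha'}f$ \emph{directly}, using the distribution inequality of \cite[Theorem 4.1]{Grafakosetal}:
\[
\Hc^n(\{\Sc_\alpha f>\lambda\})\meg C_1\Hc^n(\{\Mc_{\alpha'}f>\lambda\})+\frac{C_1}{\lambda^2}\int_{\{\Mc_{\alpha'}f\meg\lambda\}}(\Mc_{\alpha'}f)^2\,\dd\Hc^n .
\]
This holds for $f\in L^1$ with $\Mc_{\alpha'}f\in L^1\cap L^p$ and $\alpha'$ sufficiently large. The paper then proceeds as follows.
\begin{enumerate}
\item[(i)] Integrating against $p\lambda^{p-1}\,\dd\lambda$ gives $\norm{\Sc_\alpha f}_{L^p}^p\meg C\norm{\Mc_{\alpha'}f}_{L^p}^p$. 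This works because $\int_M^\infty\lambda^{p-3}\,\dd\lambda<\infty$ for $p<2$.
\item[(ii)] Apply this to $f*p_{\vect t}$. For $p\meg1$ it lies in $L^p\cap L^\infty\subseteq L^1$, and $\Mc_{\alpha'}(f*p_{\vect t})\meg\Mc_{\alpha'}f$.
\item[(iii)] Let $\vect t\to 0$ using Fatou's lemma.
\item[(iv)] Remove the largeness assumption on $\alpha'$ with Proposition~\ref{prop:3b}.
\end{enumerate}
No Fourier decomposition or holomorphic extension is needed.
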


\begin{proof}
	\textsc{Step I} Assume first that $f\in L^1(\R^n)$, that $\Mc_{\alpha'} f\in L^1(\R^n)\cap L^p(\R^n)$, and that $p<2$. Then,~\cite[Theorem 4.1]{Grafakosetal} shows that there is a constant $C_1>0$ such that, if $\alpha'$ is sufficiently large,\footnote{Actually, the cited reference explicitly considers only the case $\alpha=1$, but the reader may verify that the same arguments work for general $\alpha>0$. }
	\[
	\Hc^n(\Set{x\in \R^n\colon (\Sc_\alpha f)(x)>\lambda})\meg C_1 \Hc^n(\Set{x\in \R^n\colon (\Mc_{\alpha'} f)(x)>\lambda})+\frac{C_1}{\lambda^2} \int_{\Set{x\in \R^n\colon (\Mc_{\alpha'} f)(x)\meg \lambda}} (\Mc_{\alpha'} f)^2\,\dd \Hc^n
	\]
	for every $\lambda>0$ and for every $f$ as above. Multiplying by $p\lambda^{p-1}$ and integrating in $\lambda$ we then infer that
	\[
	\begin{split}
	\norm{\Sc_\alpha f}_{L^p(\R^n)}^p&\meg C_1 \norm{\Mc_{\alpha'} f}_{L^p(\R^n)}^p+pC_1 \int_0^{+\infty} \lambda^{p-3} \int_{\Set{x\in \R^n\colon (\Mc_{\alpha'} f)(x)\meg \lambda}} (\Mc_{\alpha'} f)^2\,\dd \Hc^n\,\dd \lambda \\
		&=C_1 \norm{\Mc_{\alpha'} f}_{L^p(\R^n)}^p+pC_1 \int_{\R^n} (\Mc_{\alpha'} f)(x)^2\int_{(\Mc_{\alpha'} f)(x)}^{+\infty} \lambda^{p-3}  \,\dd \lambda\,\dd \Hc^n(x)\\
		&=C_1 \norm{\Mc_{\alpha'} f}_{L^p(\R^n)}^p+\frac{pC_1}{2-p} \int_{\R^n} (\Mc_{\alpha'} f)(x)^p   \,\dd \Hc^n(x),
	\end{split}
	\]
	whence the conclusion in this case.
	
	\textsc{Step II} Now, assume that $p\meg 1$. Take $\vect t\in (\R_+^*)^m$ and observe that $f*p_{\vect t}\in W^{\infty,\infty}(\R^n)$, so that clearly $\Mc_{\alpha'}(f*p_{\vect t})\in L^\infty(\R^n)$. Since clearly $\abs{f*p_{\vect t}}\meg\Mc_{\alpha'} (f*p_{\vect t})\meg \Mc_{\alpha'} f$, we have $f*p_{\vect t},\Mc_{\alpha'} (f*p_{\vect t})\in L^p(\R^n)\cap L^\infty(\R^n)$. Then, by~\textsc{step I},
	\[
	\norm{\Sc_\alpha(f*p_{\vect t})}_{L^p(\R^n)}\meg \Big(\frac{2C_1}{2-p}\Big)^{1/p} \norm{\Mc_{\alpha'}(f*p_{\vect t})}_{L^p(\R^n)}\meg \Big(\frac{2C_1}{2-p}\Big)^{1/p} \norm{\Mc_{\alpha'}f}_{L^p(\R^n)}.
	\]
	By Fatou's lemma, passing to the limit for $\vect t\to 0$ we then get to 
	\[
	\norm{\Sc_\alpha f}_{L^p(\R^n)}\meg  \Big(\frac{2C_1}{2-p}\Big)^{1/p} \norm{\Mc_{\alpha'}f}_{L^p(\R^n)}.
	\]
	The last assertion was proved in~\cite[Proposition 6.1]{Grafakosetal} when $f\in L^1(\R^n)$. The extension to general $f$ may be obtained as above.
\end{proof}

We shall now complete the picture with the following result.

\begin{prop}\label{prop:5b}
	Take $p\in (0,1]$. Then, there is a constant $C>0$ such that 
	\[
	\sum_{\mbeps\in A}\sup_{\vect t\in (\R_+^*)^m}\norm{f*H_\mbeps*p_{\vect t}}_{L^p(\R^n)}\meg C \norm{\Sc_0 f}_{L^p(\R^n)}
	\]
	for every $f\in \bigcup_{q\in [1,\infty)}W^{-\infty,q}(\R^n)$.
\end{prop}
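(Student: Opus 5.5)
The plan is to peel off the directions $e_1,\dots,e_m$ one at a time. At each step we apply the one-dimensional vector-valued Theorem~\ref{teo:1b}, implication (2)$\Rightarrow$(3) with $\alpha'=0$, along lines parallel to $e_j$, with a Hilbert space $H$ that absorbs the remaining parameters. By Remark~\ref{oss:1b} and its proof, $H_\mbeps=h_{\eps_1,e_1}*\cdots*h_{\eps_m,e_m}$, and $p_{\vect t}=p^{(1)}_{t_1}*\cdots*p^{(m)}_{t_m}$, where $p^{(j)}_{t}$ denotes the one-dimensional Poisson kernel pushed forward along $\R e_j$. All these factors commute. Since $A$ is finite, it suffices to fix $\mbeps\in A$. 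For $0\meg k\meg m$ and $\vect t\in(\R_+^*)^m$, set
\[
G_k(x;\vect t)\coloneqq \nabla_{k+1}\cdots\nabla_m\big(f*h_{\eps_1,e_1}*\cdots*h_{\eps_k,e_k}*p_{\vect t}\big)(x)
\]
and
\[
\Phi_k(x;t_1,\dots,t_k)\coloneqq \bigg(\int_{(\R_+^*)^{m-k}}\abs{G_k(x;\vect t)}^2\,t_{k+1}\cdots t_m\,\dd\Hc^{m-k}(t_{k+1},\dots,t_m)\bigg)^{1/2}.
\]
Then $\Phi_0=\Sc_0 f$, and $\Phi_m(x;\vect t)=\abs{(f*H_\mbeps*p_{\vect t})(x)}$. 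The claim to be proved by induction on $k$ is
\[
\sup_{t_1,\dots,t_k>0}\norm{\Phi_k(\,\cdot\,;t_1,\dots,t_k)}_{L^p(\R^n)}\meg C^k\norm{\Sc_0 f}_{L^p(\R^n)}.
\]

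For the step $k\to k+1$, fix $t_1,\dots,t_k$ and write $x=x'+s e_{k+1}$, with $x'$ in a hyperplane transversal to $e_{k+1}$. Take the real Hilbert space
\[
H\coloneqq L^2\big((\R_+^*)^{m-k-1},t_{k+2}\cdots t_m\,\dd\Hc^{m-k-1};\R^{N}\big),
\]
where $\R^N$ holds the components of $\nabla_{k+2}\cdots\nabla_m$ (real and imaginary parts separately). Consider the $H$-valued function of $s\in\R$ given by
\[
F_{x'}(s)\coloneqq \nabla_{k+2}\cdots\nabla_m\big(f*h_{\eps_1,e_1}*\cdots*h_{\eps_k,e_k}*p_{(t_1,\dots,t_k,0,t_{k+2},\dots)}\big)(x'+se_{k+1}).
\]
Convolving $F_{x'}$ with the one-dimensional $p_{t_{k+1}}$ reproduces $p^{(k+1)}_{t_{k+1}}$. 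Since $e_{k+1}$ is a unit vector, the $(s,t)$-gradient of that convolution is exactly $\nabla_{k+1}$. Consequently, the one-dimensional $g$-function of $F_{x'}$ at $s$ equals $\Phi_k(x'+se_{k+1};t_1,\dots,t_k)$. Here I take the standard weight $t\,\dd t$ in the one-dimensional $g$-function, consistent with the weights $t_j$ in $\Sc_0$. Theorem~\ref{teo:1b} then gives
\[
\sup_{t_{k+1}>0}\norm{F_{x'}*h_{\eps_{k+1}}*p_{t_{k+1}}}_{L^p(\R;H_\C)}\meg C\norm{\Phi_k(x'+\,\cdot\,e_{k+1};t_1,\dots,t_k)}_{L^p(\R)}.
\]
The left-hand side, evaluated at $s$, is $\Phi_{k+1}(x'+se_{k+1};t_1,\dots,t_{k+1})$. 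Raising to the $p$-th power and integrating in $x'$ (Fubini, with the Jacobian of the linear change of variables absorbed into constants) gives $\norm{\Phi_{k+1}(\,\cdot\,;t_1,\dots,t_{k+1})}_{L^p}\meg C\norm{\Phi_k(\,\cdot\,;t_1,\dots,t_k)}_{L^p}$ for every fixed $t_{k+1}$. Taking suprema and using the inductive hypothesis completes the step. Note that the inequality is applied for fixed $t_1,\dots,t_k$, so the supremum stays outside throughout.

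The main obstacle is justifying that $F_{x'}$ satisfies the hypotheses of Theorem~\ref{teo:1b}, namely that it belongs to $\bigcup_q W^{-\infty,q}(\R;H)$ for a.e.\ $x'$. Here $f$ is merely a distribution and the $t_{k+1}$-smoothing has been removed. I would handle this by regularization. First replace $f$ by $f*p_{\vect s}$ with $\vect s\in(\R_+^*)^m$; this lies in $W^{\infty,q}(\R^n)$ for some $q<\infty$, so $f*p_{\vect s}*h_{\eps_1,e_1}*\cdots$ lies in $W^{\infty,q'}$ for suitable $q'\in(1,\infty)$. Then check that $F_{x'}$ belongs to $L^{q'}(\R;H)$ together with its derivatives, using the decay of $t$-derivatives of Poisson kernels, which makes the weighted $L^2$ integrals in the $t_j$ converge. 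Since $\Sc_0(f*p_{\vect s})\meg C\,\Mc_{\mathrm{it}}$-type bounds are not available here, I would instead use the semigroup property: $\Sc_0(f*p_{\vect s})(x)$ is controlled by a version of $\Sc_0 f$ with shifted $t$-variables, whose $L^p$ norm is at most that of $\Sc_0 f$ up to constants (monotonicity in $t_j$ of the weight). Finally, let $\vect s\to0$ and conclude by Fatou's lemma, since $f*H_\mbeps*p_{\vect t+\vect s}\to f*H_\mbeps*p_{\vect t}$ pointwise.
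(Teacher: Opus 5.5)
Your proposal is correct and follows essentially the same route as the paper. You iterate the vector-valued one-dimensional Theorem~\ref{teo:1b} (with $\alpha'=0$) direction by direction, letting a weighted $L^2$ space absorb the remaining $t_j$ and $\nabla_j$, integrate over hyperplanes transversal to $e_j$, and regularize by $f*p_{\vect s}$ using the pointwise bound $\Sc_0(f*p_{\vect s})\meg \Sc_0 f$. The only difference is bookkeeping: the paper (for $f\in L^q$) sends each treated $t_j$ to $0$ to get $\norm{f*H_\mbeps}_{L^p}\meg C^m\norm{\Sc_0 f}_{L^p}$ and then applies this to $f*p_{\vect t}$, whereas you keep $t_1,\dots,t_k>0$ and carry the supremum through the induction, which makes your final Fatou step unnecessary since any $\vect t'$ can be written as $\vect t+\vect s$.
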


\begin{proof}
	\textsc{Step I}
	Fix $\mbeps\in A$, and define $h_{\eps_j,e_j}\in \Sc'(\R^n)$, for every $j=1,\dots, m$, so that 
	\[
	\Fc h_{\eps_j,e_j}= \chi_{\eps_j(0,+\infty)}(\langle \,\cdot\,, e_j\rangle ).
	\]
	By the proof of Remark~\ref{oss:1b}, we see that $h_{\eps_j,e_j}\in \bigcap_{q\in (1,\infty)}W^{-\infty,q}(\R^n)$ for every $j=1,\dots, m$, and that $H_\mbeps=h_{\eps_1,e_1}*\cdots* h_{\eps_m,e_m}$. In addition, it is readily seen that $h_{\eps_j,e_j}$ is a convolutor of $L^q(\R^n)$ for every $j=1,\dots, n$ and for every $q\in (1,\infty)$.
	 
	Assume  that $f\in L^q(\R^n)$ for some $q\in (1,\infty)$. 
	Observe that, if $h$ is any convolutor of $L^q(\R^n)$, then $f*h\in L^q(\R^n)$ and $f*h*p_{\vect t}\to f*h$ in $L^q(\R^n)$ and almost everywhere for $\vect t\to 0$, $\vect t \in(\R_+^*)^m$ (cf.~\cite[Theorem 3.1]{Grafakosetal}). In particular, 
	\[
	\sup_{\vect t\in (\R_+^*)^m}\norm{f*h*p_{\vect t}}_{L^p(\R^n)}=\norm{f*h}_{L^p(\R^n)}.
	\]  
	
	Define $X_1\coloneqq L^2((\R_+^*)^{m-1}, t_2\cdots t_m \dd (t_2,\dots, t_m))$ and observe that, since $f\in L^q(\R^n)$ and $\Sc_0 f\in L^p(\R^n)$, for $\Hc^{n-1}$-almost every $x'\in e_1^\perp$ the function $g_{x'}\colon \R\ni x \mapsto [(t_2,\dots,t_m)\mapsto (f*p_{(0,t_2,\dots, t_m)})(x'+ x e_1)]\in X_1$ satisfies
	\[
	\norm*{\int_{(0,+\infty)} \abs{\nabla(g_{x'}*p_t)(x)}^2 t\,\dd t}_{L^p_x(\R;X_1)}=\norm{(\Sc_0 f)(x'+x e_1)}_{L^p_x(\R)}<\infty,
	\]
	(where we wrote $\norm{G(x)}_{L^p_x(\R^n)}$ instead of $\norm{G}_{L^p(\R^n)}$ to simplify the notation),
	that is, $g_{x'}$ belongs to $H^p(\R;X_1)$, thanks to Theorem~\ref{teo:1b}. More precisely, taking into account the previous remarks, we may find a constant $C_1>0$ (independent of $f$ and $x'$) such that
	\[
	\norm*{ g_{x'}*\Fc^{-1}\chi_{\eps_1(0,+\infty)} }_{L^p(\R;X_1)}\meg  C_1\norm{(\Sc_0 f)(x'+x e_1)}_{L^p_x(\R)}.
	\]
	Integrating in $x'$, we then infer that
	\[
	\norm*{g*h_{\eps_1,e_1}}_{L^p(\R^n;X_1)}\meg C_1\norm{\Sc_0 f}_{L^p(\R^n)},
	\] 
	where $g\colon \R^n\ni x\mapsto [(t_2,\dots, t_m)\mapsto (f*p_{(0,t_2,\dots, t_m)})(x)]\in X_1$.
	Iterating this procedure, we then infer that
	\[
	\norm{f*H_\mbeps}_{L^p(\R^n)}\meg  C_1^m\norm{\Sc_0 f}_{L^p(\R^n)}.
	\]
	
	\textsc{Step II} Now, consider the general case. Applying the previous arguments to $f*p_{ \vect t}$, which clearly belongs to $L^q(\R^n)$ for some $q\in (1,\infty)$, we see that
	\[
	\sum_{\mbeps\in A}\norm{f*p_{\vect t}*H_\mbeps}_{L^p(\R^n)}\meg (2 C_1)^m\norm{\Sc_0(f*p_{\vect t})}_{L^p(\R^n)}\meg (2C_1)^m \norm{\Sc_0 f}_{L^p(\R^n)}
	\]
	for every $\vect t\in (\R_+^*)^m$, whence the conclusion.
\end{proof}

\begin{oss}
	When $p\in (1,\infty)$, it is readily seen that $H_\mbeps$ is a convolutor of $L^p(\R^n)$ for every $\mbeps\in A$, so that $f\in L^p(\R^n)$ if and only if ($f\in \bigcup_{q\in (1,\infty)} W^{-\infty,q}(\R^n)$ and) $f*H_\mbeps\in L^p(\R^n)$ for every $\mbeps\in A$. By Proposition~\ref{prop:2b}, it then follows that $\Mc_\alpha f\in L^p(\R^n)$ if and only if $f\in L^p(\R^n)$,  for every $\alpha \Meg 0$. The continuity of $\Sc_\alpha$ on $L^p(\R^n)$ may be obtained arguing as in the proof Propositions~\ref{prop:3b}, with minor adjustments, when $p\in (1,2)$; the assertion then follows by duality when $p\in (2,\infty)$, and by interpolation (or directly) when $p=2$. In order to conclude, one only has to show that $\Sc_0$ is invertible in $\Lc(L^p(\R^n))$ (so that the same holds for $\Sc_\alpha$ for every $\alpha>0$). This may be obtained arguing as in the proof of Proposition~\ref{prop:5b}.
\end{oss}

Let us now draw a few consequences of the previous results.
First, it is now completely clear that the real  Hardy spaces $H^p_{e_1,\dots, e_m}(\R^n)=\Set{f\in \bigcup_{q\in (1,\infty)}W^{-\infty,q}(\R^n)\colon \Mc_1 f\in L^p(\R^n)}$, $p\in (0,1]$, actually depend only on the convex cones $U_\mbeps$, $\mbeps\in A$: any other family of unit vectors in $\R^n$ inducing the same system of convex cones will define the same Hardy spaces, and conversely. This leads to a more precise statement and a substantially simpler proof of~\cite[Proposition 3.7]{Grafakosetal}. In addition, the proof of~\cite[Theorem 1.2]{Grafakosetal} may be extended to cover the case $p\in (0,1]$, instead of only $p=1$: it suffices to apply, with the usual arguments, the equivalence of the various characterizations of the real Hardy spaces $H^p_{e_1,\dots, e_m}(\R^n)$. Moreover, the long and delicate arguments of~\cite[Section 5 and the Appendix]{Grafakosetal} are no longer necessary to prove the implication (4) $\implies $ (1) in~\cite[Theorem 1.2]{Grafakosetal}.

\end{document}